\newtheorem{thm}{Theorem}[section]
\newtheorem{lem}[thm]{Lemma}
\newtheorem{ass}[thm]{Assumption}
\theoremstyle{definition}
\newtheorem*{claim*}{Claim}
\theoremstyle{remark}
\numberwithin{equation}{section}
\title{\vspace{-3cm}\textbf{The direct and inverse scattering problem for the semilinear Schr\"{o}dinger equation}}
\author{Takashi FURUYA}
\date{}
\begin{document}
\maketitle
\begin{abstract}
We study the direct and inverse scattering problem for the semilinear Schr\"{o}dinger equation $\Delta u+a(x,u)+k^2u=0$ in $\mathbb{R}^d$. We show well-posedness in the direct problem for small solutions based on the Banach fixed point theorem, and the solution has the certain asymptotic behavior at infinity. We also show the inverse problem that the semilinear function $a(x,z)$ is uniquely determined from the scattering data. The idea is certain linearization that by using sources with several parameters we differentiate the nonlinear equation with respect to these parameter in order to get the linear one. (see \cite{A. Feizmohammadi and L. Oksanen, M. Salo1, M. Salo2}.)
\end{abstract}
\date{{\bf Key words}. inverse scattering problem, semilinear Schr\"{o}dinger equation, linearization, Herglotz wave function.}
\section{Introduction}
In this paper, we study the direct and inverse scattering problem for the semilinear Schr\"{o}dinger equation 
\begin{equation}
\Delta u+a(x,u)+k^2u=0\  \mathrm{in} \ \mathbb{R}^d,\label{1.1}
\end{equation}
where $d \geq 2$, and $k>0$. Throughout this paper, we make the following assumptions for the semilinear function $a:\mathbb{R}^d \times \mathbb{C} \to \mathbb{C}$.
\begin{ass}
We assume that 
\begin{description}
\item[(i)] $a(x,0)=0$ for all $x  \in  \mathbb{R}^d$.
  
\item[(ii)] $a(x,z)$ is holomorphic at $z=0$ for each $x  \in  \mathbb{R}^d$, that is, there exists $\eta>0$ such that $a(x,z)=\sum_{l=1}^{\infty}\frac{\partial_{z}^{l}a(x,0)}{l!}z^l$ for $|z|<\eta$.
  
\item[(iii)] $\partial_{z}^{l}a(\cdot,0) \in L^{\infty}(\mathbb{R}^d)$ for all $l \geq 1$. Furthermore, there exists $c_0>0$ such that $\left\| \partial_{z}^{l}a(\cdot,0) \right\|_{L^{\infty}(\mathbb{R}^d)} \leq c_0^{l}$ for all $l\geq1$
  
\item[(iv)] There exists $R>0$ such that $\mathrm{supp}\partial_{z}^{l} a(\cdot,0) \subset B_R$ where $B_R \subset \mathbb{R}^d$ is a open ball with center 0 and radius $R>0$.
\end{description}
\end{ass}
The above assumptions include the standard type $q(x)u$ where $q \in L^{\infty}(\mathbb{R}^d)$ with compact support, and the power type $q(x)u^m$ where $m \in \mathbb{N}$. So far, the inverse problem for the power type in bounded domain via the Dirichlet-to-Neumann map has been studied in \cite{A. Feizmohammadi and L. Oksanen, M. Salo1}, and for more general case we refer to \cite{M. Salo2}, which also discusses partial data inverse boundary problem. 
\par
We consider the incident field $u^{in}_{g}$ as the {\it Herglotz wave function}
\begin{equation}
u^{in}_g(x):=\int_{\mathbb{S}^{d-1}}e^{-ikx\cdot \theta}g(\theta)ds(\theta), \ x \in \mathbb{R}^d, \ g \in L^{2}(\mathbb{S}^{d-1}), \label{1.2}
\end{equation}
which solves the free Schr\"{o}dinger equation $\Delta u^{in}_g+k^2u^{in}_g=0$ in $\mathbb{R}^d$. The scattered field $u^{sc}_g$ corresponding to the incident field $u^{in}_g$ is a solution of the following Schr\"{o}dinger equation perturbed by the semilinear function $a(x,z)$ 
\begin{equation}
\Delta u_g+a(x,u_g)+k^2u_g=0 \ \mathrm{in} \ \mathbb{R}^d,\label{1.3}
\end{equation}
where $u_g$ is total field that is of the form $u_g = u^{sc}_g +u^{in}_g$, and the scattered field $u^{sc}$ satisfies the {\it Sommerfeld radiation condition}
\begin{equation}
\lim_{r \to \infty} r^{\frac{d-1}{2}} \biggl( \frac{\partial u^{sc}}{\partial  r}-iku^{sc} \biggr)=0, \label{1.4}
\end{equation}
where $r=|x|$.
\par
Since support of the function $a(x,z)$ is compact, the direct scattering problem (\ref{1.3})--(\ref{1.4}) is equivalent to the following integral equation. (see e.g., the argument of Theorem 8.3 in \cite{D. Colton and R. Kress}.)
\begin{equation}
u_g(x)=u^{in}_g+\int_{\mathbb{R}^d}\Phi(x,y)a(y,u_g(y))dy, \ \ x \in \ \mathbb{R}^d,\label{1.5}
\end{equation}
where $\Phi(x,y)$ is the fundamental solution for $-\Delta - k^2$ in $\mathbb{R}^d$. In the following theorem, we will find a small solution $u^{sc}_g$ of (\ref{1.5}) for small $g \in L^{\infty}(\mathbb{R}^d)$.
\begin{thm}
We assume that $a(x,z)$ satisfies Assumption 1.1. Then, there exists $\delta_0 \in (0,1)$ such that for all $\delta \in (0, \delta_0)$ and $g \in L^{\infty}(\mathbb{R}^d)$ with $\left\| g \right\|_{L^{\infty}(\mathbb{R}^d)}<\delta^2$, there exists a unique solution $u^{sc}_{g} \in L^{\infty}(\mathbb{R}^d)$ with $\left\| u^{sc}_{g} \right\|_{L^{\infty}(\mathbb{R}^d)}\leq \delta$ such that 
\begin{equation}
u^{sc}_g(x)=\int_{\mathbb{R}^d}\Phi(x,y)a(y,u^{sc}_g(y)+u^{in}_g(y))dy, \ x \in \mathbb{R}^d.\label{1.6}
\end{equation}

\end{thm}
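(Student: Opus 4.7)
The approach is to apply the Banach fixed point theorem to
\[
T\colon L^\infty(\mathbb{R}^d)\to L^\infty(\mathbb{R}^d), \qquad T(v)(x):=\int_{\mathbb{R}^d}\Phi(x,y)\,a\bigl(y,v(y)+u^{in}_g(y)\bigr)\,dy,
\]
on the closed ball $\mathcal{B}_\delta:=\{v\in L^\infty(\mathbb{R}^d):\|v\|_\infty\leq\delta\}$. A fixed point of $T$ in $\mathcal{B}_\delta$ is exactly the desired $u^{sc}_g$, so the task reduces to showing that for $\delta_0$ small enough, $T$ is a self-map of $\mathcal{B}_\delta$ and a strict contraction there.

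\textbf{Key estimates.} Three elementary bounds drive the argument. First, the Herglotz wave is trivially controlled: $\|u^{in}_g\|_\infty \leq |\mathbb{S}^{d-1}|\,\|g\|_\infty \leq |\mathbb{S}^{d-1}|\,\delta^2$. Second, because Assumption 1.1(iv) confines $a(y,\cdot)$ to $y\in B_R$, one has
\[
\|T(v)\|_\infty\leq M\,\|a(\cdot,v+u^{in}_g)\|_{L^\infty(B_R)}, \qquad M:=\sup_{x\in\mathbb{R}^d}\int_{B_R}|\Phi(x,y)|\,dy<\infty,
\]
the finiteness of $M$ being a consequence of the integrable singularity of $\Phi$ on the bounded set $B_R$. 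Third, Assumption 1.1(ii)--(iii) together with termwise estimation of the Taylor series furnish, for $|z|,|z_1|,|z_2|<\eta$,
\[
|a(x,z)|\leq c_0|z|\,e^{c_0|z|}, \qquad |a(x,z_1)-a(x,z_2)|\leq c_0\,e^{c_0\max(|z_1|,|z_2|)}\,|z_1-z_2|,
\]
where the Lipschitz bound is obtained by applying the same reasoning to $\partial_z a$.

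\textbf{Self-map and contraction.} For $v\in\mathcal{B}_\delta$ and $\delta$ small enough that $\delta+|\mathbb{S}^{d-1}|\delta^2<\eta$, stacking the three estimates gives
\[
\|T(v)\|_\infty \leq Mc_0\bigl(\delta+|\mathbb{S}^{d-1}|\delta^2\bigr)\exp\bigl(c_0(\delta+|\mathbb{S}^{d-1}|\delta^2)\bigr),
\]
and, for $v_1,v_2\in\mathcal{B}_\delta$,
\[
\|T(v_1)-T(v_2)\|_\infty \leq Mc_0\,\exp\bigl(c_0(\delta+|\mathbb{S}^{d-1}|\delta^2)\bigr)\,\|v_1-v_2\|_\infty.
\]
Choosing $\delta_0$ sufficiently small makes the right-hand side of the first estimate at most $\delta$ and the contraction prefactor in the second strictly less than $1$; Banach's theorem then delivers a unique $u^{sc}_g\in\mathcal{B}_\delta$ satisfying (\ref{1.6}).

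\textbf{Main obstacle.} The one delicate point is the calibration of $\delta_0$. The constants $M$ and $c_0$ are intrinsic to $\Phi$ and $a$ and do not shrink with $\delta$, so the closure of both inequalities depends on carefully exploiting the quadratic smallness hypothesis $\|g\|_\infty<\delta^2$ (which suppresses the source term $u^{in}_g$) together with the closeness of the exponential factor to $1$. Once the arithmetic balancing these is in place, the remaining items---measurability of $T(v)$, continuous dependence on $g$, and uniqueness within $\mathcal{B}_\delta$---are routine consequences of the fixed-point theorem.
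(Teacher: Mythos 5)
There is a genuine gap, and it is exactly at the point you flag as the ``main obstacle'': with the free Green's function $\Phi$ and the full nonlinearity $a$, the map $T$ is in general \emph{not} a contraction on $\mathcal{B}_\delta$, no matter how small $\delta_0$ is. Write $a(y,z)=q(y)z+\sum_{l\geq 2}\frac{\partial_z^l a(y,0)}{l!}z^l$ with $q:=\partial_z a(\cdot,0)$. The linear term contributes
\[
\int_{B_R}\Phi(x,y)\,q(y)\bigl(v_1(y)-v_2(y)\bigr)\,dy
\]
to $T(v_1)-T(v_2)$, whose $L^\infty$ operator norm is comparable to $M\|q\|_{L^\infty}$ --- a fixed constant that does not shrink as $\delta\to 0$. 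Your own Lipschitz bound shows this: the prefactor $Mc_0\exp\bigl(c_0(\delta+|\mathbb{S}^{d-1}|\delta^2)\bigr)$ tends to $Mc_0$ as $\delta\to 0$, and there is no hypothesis forcing $Mc_0<1$. The same defect kills the self-map estimate, since $Mc_0\delta\exp(\cdots)\leq\delta$ again requires $Mc_0\leq 1$ up to $O(\delta)$. The quadratic smallness $\|g\|_\infty<\delta^2$ only suppresses the source $u^{in}_g$; it does nothing to the term of $a$ that is linear in the unknown $u^{sc}_g$.

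The paper's proof resolves precisely this issue by peeling off the linear part before iterating: it defines
\[
Tw(x)=\int_{\mathbb{R}^d}\Phi_q(x,y)\Bigl[a\bigl(y,w(y)+v_g(y)\bigr)-q(y)w(y)\Bigr]dy
=\int_{\mathbb{R}^d}\Phi_q(x,y)\Bigl[\sum_{l\geq 2}\tfrac{\partial_z^l a(y,0)}{l!}\bigl(w+v_g\bigr)^l+q\,v_g\Bigr]dy,
\]
where $\Phi_q$ is the outgoing Green's function of the \emph{perturbed} operator $-\Delta-k^2-q$. The bracket now contains only terms that are at least quadratic in $(w+v_g)$ plus the $O(\delta^2)$ term $qv_g$, so on $\mathcal{B}_\delta$ one gets $\|Tw\|_\infty\leq C\delta^2$ and a contraction constant $C'\delta$, both of which are beaten by taking $\delta_0$ small. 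A final step (equivalence of the $\Phi_q$-integral equation with the $\Phi$-integral equation, as in Theorem 8.3 of Colton--Kress) recovers (\ref{1.6}). To repair your argument you would either need to adopt this splitting (or an equivalent resolvent identity for $(-\Delta-k^2-q)^{-1}$), or impose an extra smallness assumption on $\|\partial_z a(\cdot,0)\|_{L^\infty}$ that the theorem does not make.
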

Theorem 1.2 is proved by the Banach fixed point theorem. By the same argument in Section 19 of \cite{G. Eskin}, the solution $u^{sc}_g$ of (\ref{1.6}) has the following asymptotic behavior
\begin{equation}
u^{sc}_g(x)=C_d\frac{\mathrm{e}^{ikr}}{r^{\frac{d-1}{2}}} u^{\infty}_g(\hat{x})+O\left(\frac{1}{r^{\frac{d+1}{2}}} \right), \ r:=|x| \to \infty, \ \ \hat{x}:=\frac{x}{|x|}. \label{1.7}
\end{equation}
where $C_d:=k^{\frac{d-3}{2}}e^{-i\frac{\pi}{4}(d-3)}/2^{\frac{d+1}{2}}\pi^{\frac{d-1}{2}}$. The function $u^{\infty}_g$ is called the {\it scattering amplitude}, which is of the form
\begin{equation}
u^{\infty}_g(\hat{x})=\int_{\mathbb{R}^d}e^{-ik\hat{x}\cdot y}a(y, u_g(y))dy, \ \hat{x} \in \mathbb{S}^{d-1}. \label{1.8}
\end{equation}
\par
Hence, we are now able to consider the inverse problem to determine the semilinear function $a(x,z)$ from the scattering data $u^{\infty}_{g}(\hat{x})$ for all $g \in L^{2}(\mathbb{S}^{d-1})$ with $\left\| g \right\|_{L^{2}(\mathbb{R}^d)}<\delta$ where $\delta>0$ is a sufficiently small. We will show the following theorem.
\begin{thm}
We assume that $a_j(x,z)$ satisfies Assumption 1.1. ($j=1,2$.) Let $u^{\infty}_{g,j}$ be the scattering amplitude for the following problem
\begin{equation}
\Delta u_{j,g}+a_j(x,u_{j,g})+k^2u_{j,g}=0 \ \mathrm{in} \ \mathbb{R}^d, \label{1.9}
\end{equation}
\begin{equation}
u_{j,g}=u^{sc}_{j,g}+u^{in}_{j,g}, \label{1.10}
\end{equation}
where $u^{sc}_{j,g}$ satisfies the Sommerfeld radiation (\ref{1.4}), and $u^{in}_g$ is given by (\ref{1.2}), and we assume that
\begin{equation}
u^{\infty}_{1,g}=u^{\infty}_{2,g}, \label{1.11}
\end{equation}
for any $g \in L^{2}(\mathbb{S}^{d-1})$ with $\left\| g \right\|_{L^{2}(\mathbb{R}^d)}<\delta$ where $\delta>0$ is sufficiently small. Then, we have
\begin{equation}
a_1(x,z)=a_2(x,z), \ x \in \mathbb{R}^d, \ |z| < \eta \label{1.12}
\end{equation}
\end{thm}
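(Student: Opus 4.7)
The plan is to adapt the higher-order linearization method of \cite{A. Feizmohammadi and L. Oksanen, M. Salo1, M. Salo2} to the present scattering framework, using Herglotz waves (\ref{1.2}) as the analogue of boundary data. I would carry out three steps: establish analytic dependence of $u^{sc}_{j,g}$ on a multi-parameter family of sources; derive a key integral identity from the hypothesis (\ref{1.11}) and the far-field formula (\ref{1.8}); and close an induction on the order of linearization via density of products of Herglotz waves in $L^1(B_R)$.

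For the setup, pick $g_0,g_1,\ldots,g_N\in L^2(\mathbb{S}^{d-1})$ and parameters $\epsilon=(\epsilon_1,\ldots,\epsilon_N)\in\mathbb{R}^N$, and set $g^\epsilon:=\sum_{l=1}^N\epsilon_l g_l$. Since $\|u^{in}_{g^\epsilon}\|_{L^\infty}\leq|\mathbb{S}^{d-1}|^{1/2}\|g^\epsilon\|_{L^2}$ by Cauchy--Schwarz, the fixed-point construction of Theorem 1.2 applies for $|\epsilon|$ small and produces a unique small $u^{sc}_{j,g^\epsilon}\in L^\infty(\mathbb{R}^d)$. A Neumann-type expansion of the fixed point in (\ref{1.6}), combined with Assumption 1.1\,(ii)--(iii), shows that $\epsilon\mapsto u^{sc}_{j,g^\epsilon}$ is real-analytic near $0$, with $u^{sc}_{j,g^0}=0$, and all mixed partials $\partial_\epsilon^\alpha u_{j,g^\epsilon}|_{\epsilon=0}$ satisfy recursive integral equations obtained from (\ref{1.6}). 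Multiplying (\ref{1.11}) by $g_0(\hat x)$, integrating over $\hat x\in\mathbb{S}^{d-1}$, using (\ref{1.8}), Fubini, and the identity $\int_{\mathbb{S}^{d-1}}e^{-ik\hat x\cdot y}g_0(\hat x)\,ds(\hat x)=u^{in}_{g_0}(y)$ yield
$$
\int_{B_R} u^{in}_{g_0}(y)\bigl\{a_1(y,u_{1,g^\epsilon}(y))-a_2(y,u_{2,g^\epsilon}(y))\bigr\}\,dy=0 \qquad (\ast)
$$
for all sufficiently small $\epsilon$.

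I would then induct on $N\geq 1$. Differentiating $(\ast)$ once in $\epsilon_1$ at $\epsilon=0$ and using Assumption 1.1\,(i) gives
$$
\int_{B_R} u^{in}_{g_0}(y)\bigl(\partial_z a_1(y,0)-\partial_z a_2(y,0)\bigr)u^{in}_{g_1}(y)\,dy=0
$$
for all $g_0,g_1\in L^2(\mathbb{S}^{d-1})$; density of products of two Herglotz waves in $L^1(B_R)$ (via Paley--Wiener, since $\{k(\theta_1+\theta_2):\theta_j\in\mathbb{S}^{d-1}\}$ has nonempty interior in $\mathbb{R}^d$ for $d\geq 2$) then forces $\partial_z a_1(\cdot,0)=\partial_z a_2(\cdot,0)$. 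Assuming inductively that $\partial_z^l a_1=\partial_z^l a_2$ on $B_R$ for all $l<N$, apply $\partial_{\epsilon_1}\cdots\partial_{\epsilon_N}|_{\epsilon=0}$ to $(\ast)$ and expand via the chain rule. By a parallel induction based on the integral equation (\ref{1.6}), every mixed partial $\partial_\epsilon^\beta u_{j,g^\epsilon}|_{\epsilon=0}$ with $|\beta|<N$ depends on $a_j$ only through $\partial_z^l a_j(\cdot,0)$ with $l<N$, so these contributions coincide for $j=1,2$ and cancel in the difference, leaving only the top-order term
$$
\int_{B_R} u^{in}_{g_0}(y)\bigl(\partial_z^N a_1(y,0)-\partial_z^N a_2(y,0)\bigr)u^{in}_{g_1}(y)\cdots u^{in}_{g_N}(y)\,dy=0.
$$
Density of $(N{+}1)$-fold products of Herglotz waves in $L^1(B_R)$ then yields $\partial_z^N a_1(\cdot,0)=\partial_z^N a_2(\cdot,0)$, and Assumption 1.1\,(ii) gives (\ref{1.12}).

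The main obstacle is the bookkeeping in the inductive step, namely verifying that the mixed partials $\partial_\epsilon^\beta u_{j,g^\epsilon}|_{\epsilon=0}$ with $|\beta|<N$ depend on $a_j$ only through $\partial_z^l a_j$ with $l<N$, so that the induction hypothesis collapses every lower-order contribution when one subtracts the $j=1$ and $j=2$ versions. This requires a careful parallel induction on $|\beta|$ based on differentiating (\ref{1.6}) and a Fa\`a di Bruno expansion of $a_j(y,u_{j,g^\epsilon})$ in $\epsilon$. The accompanying density of $(N{+}1)$-fold products of Herglotz waves in $L^1(B_R)$ is a secondary but standard technical ingredient, following from Paley--Wiener and the fact that the $N{+}1$-fold sumset of $k\mathbb{S}^{d-1}$ has nonempty interior in $\mathbb{R}^d$.
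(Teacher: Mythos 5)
There is a genuine gap, and it occurs at the very first linearization step. When you differentiate $(\ast)$ in $\epsilon_1$ at $\epsilon=0$ you get $\int u^{in}_{g_0}\bigl(q_1\,\partial_{\epsilon_1}u_{1,g^\epsilon}|_{\epsilon=0}-q_2\,\partial_{\epsilon_1}u_{2,g^\epsilon}|_{\epsilon=0}\bigr)dy=0$ with $q_j:=\partial_z a_j(\cdot,0)$, and $\partial_{\epsilon_1}u_{j,g^\epsilon}|_{\epsilon=0}$ is \emph{not} the Herglotz wave $u^{in}_{g_1}$: differentiating the fixed-point equation (\ref{1.6}) shows it equals $u^{in}_{g_1}+w_{1,j}$, where $w_{1,j}$ is the radiating solution of $\Delta w_{1,j}+k^2w_{1,j}=-q_j(w_{1,j}+u^{in}_{g_1})$. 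In other words, the first-order linearized fields are the \emph{total} fields $T_{q_j}Hg_1$ of the linear scattering problem with potential $q_j$, which depend on $j$ and carry a scattered part you have dropped. Your identity $\int u^{in}_{g_0}(q_1-q_2)u^{in}_{g_1}=0$ is therefore not what the hypothesis yields (it would be correct only if $q_j\equiv 0$), and the same defect propagates to every order: the top-order term in the $N$-fold derivative is $f\prod_{l}T_{q}Hg_l$ with $f=\partial_z^Na_1(\cdot,0)-\partial_z^Na_2(\cdot,0)$, a product of solutions of the \emph{perturbed} equation $\Delta v+k^2v+qv=0$, not of free plane-wave superpositions. Consequently the Paley--Wiener sumset argument for products of Herglotz waves is not available; one needs instead the completeness of products of solutions of Schr\"odinger equations (Lemma 2.2, i.e.\ Sylvester--Uhlmann/Bukhgeim), the density of the range of $T_qH$ in that solution space (Lemma 2.1), and, to strip off the extra factors when $N\geq 2$, complex geometrical optics solutions as in (\ref{5.31})--(\ref{5.36}).

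A second, related omission: after $N$-fold differentiation the identity also contains the term $\int u^{in}_{g_0}\,q\,\tilde w_N\,dy$, where $\tilde w_N$ is the difference of the $N$-th order derivatives of the two scattered fields; this term does not vanish a priori and cannot be removed by testing against a free Herglotz wave, since $u^{in}_{g_0}$ solves only the unperturbed Helmholtz equation. The paper eliminates it by first invoking Rellich's lemma (the far fields of $\tilde w_N$ agree, so $\tilde w_N=0$ outside $B_R$) and then applying Green's second identity on $B_{R+1}$ against a solution $\tilde v$ of the \emph{perturbed} equation $\Delta\tilde v+k^2\tilde v+q\tilde v=0$, which produces the clean Alessandrini-type identity $\int f\prod_h T_qHg_h\,\tilde v\,dx=0$. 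Your sub-inductive bookkeeping claim (that lower-order mixed partials of $u_{j,g^\epsilon}$ coincide for $j=1,2$ under the induction hypothesis) is essentially the right idea and matches the paper's sub-induction, but it must be run for the perturbed linearized equations and combined with Rellich's lemma at each stage; as written, the reduction to products of Herglotz waves makes the final density step rest on the wrong function system.
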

The idea of the proof is the linearization, which by using sources with several parameters we differentiate the nonlinear equation with respect to these parameter in order to get the linear one. (For such ideas, we refer to \cite{A. Feizmohammadi and L. Oksanen, M. Salo1, M. Salo2}.) The inverse scattering problems for non-linear Schr\"{o}dinger equation have been studied in different types of the non-linear potential $a(x,u)$ and in various ways. (See, e.g., \cite{M. Harju and V. Serov, V. Serov, V. Serov2, M. Watanabe, R. Weder1, R. Weder2}.) The feature of our works is to recover the whole nonlinearity $a(x,z)$ from the scattering data, that wavenumber $k>0$ is fixed and the incident wave is all of small Herglotz wave functions.
\par
This paper is organized as follows. In Section 2, we recall the Green function for the Helmholtz equation and its properties. We also prepare the several lemmas required in the forthcoming argument. In Section 3, we prove Theorem 1.2 based on the Banach fixed point theorem. In Section 4, we consider the special solution of (\ref{1.3})--(\ref{1.4}) corresponding to the incident field with several parameters in order to linearize problems. Finally in Section 5, we prove Theorem 1.3.  
\section{Preliminary}
First, we recall the Green functions for the Helmholtz equation and its properties. We denote the Green function for $-\Delta-k^2$ in $\mathbb{R}^d$ by $\Phi(x,y)$, that is, $\Phi(x,y)$ satisfies
\begin{equation}
(-\Delta-k^2)\Phi(x,y) = \delta(x-y), \label{2.1}
\end{equation}
for $x,y \in\mathbb{R}^d$, $x\neq y$. In the case of $d=2,3$, $\Phi(x,y)$ is of the form
\begin{eqnarray}
\Phi(x,y)=\left\{ \begin{array}{ll}
\frac{i}{4}H^{(1)}_{0}(k|x-y|) & \quad \mbox{for $x,y \in\mathbb{R}^2 $, $x\neq y$}  \\
\dfrac{e^{ik|x-y|}}{4\pi |x-y|} & \quad \mbox{for $x,y \in\mathbb{R}^3 $, $x\neq y$}
\end{array} \right.\label{2.2}
\end{eqnarray}
Let $q \in L^{\infty}(\mathbb{R}^d)$ with compact support. We denote the Green function for $-\Delta-k^2-q$ in $\mathbb{R}^d$ by $\Phi_q(x,y)$, that is, $\Phi_q(x,y)$ satisfies
\begin{equation}
(-\Delta-k^2-q)\Phi_q(x,y) = \delta(x-y). \label{2.3}
\end{equation}
for $x,y \in\mathbb{R}^d$, $x\neq y$. It is well known that for every fixed $y$, $\Phi(x,y)$ and $\Phi_q(x,y)$ satisfy the Sommerfeld radiation condition. 
\par
We also recall the asymptotics behavior of $\Phi(x,y)$ as $|x|\to \infty$. In Lemma 19.3 of \cite{G. Eskin}, $\Phi(x,y)$ has the following asymptotics behavior for every fixed $y$,
\begin{equation}
\Phi(x,y)=C_d\frac{\mathrm{e}^{ik|x-y|}}{|x-y|^{\frac{d-1}{2}}}+O\left(\frac{1}{|x-y|^{\frac{d+1}{2}}} \right), \ |x| \to \infty \label{2.4}
\end{equation}
and (see the proof of Theorem 19.5 in \cite{G. Eskin}) 
\begin{eqnarray}
\Phi(x,y)=\left\{ \begin{array}{ll}
O\left(\frac{1}{|x-y|^{d-2}} \right) & \quad \mbox{$d \geq 3$, $x\neq y$}  \\
O\left(\big| \mathrm{ln}|x-y| \big|\right) & \quad \mbox{$d=2$, $x\neq y$}
\end{array} \right.\label{2.5}
\end{eqnarray}
In Theorem 19.5 of \cite{G. Eskin}, for every $f \in L^{\infty}(\mathbb{R}^d)$ with compact support, $u(x)=\int_{\mathbb{R}^d}\Phi(x,y)f(y)dy$ is a unique radiating solution. (that is, $u$ satisfies the Sommerfeld radiation condition (\ref{1.4}).) Furthermore, $u$ has the following asymptotic behavior
\begin{equation}
u(x)=C_d\frac{\mathrm{e}^{ikr}}{r^{\frac{d-1}{2}}} u^{\infty}(\hat{x})+O\left(\frac{1}{r^{\frac{d+1}{2}}} \right), \ r=|x| \to \infty, \ \ \hat{x}:=\frac{x}{|x|}, \label{2.6}
\end{equation}
where the scattering amplitude $u^{\infty}$ is of the form
\begin{equation}
u^{\infty}(\hat{x})=\int_{\mathbb{R}^d}e^{-ik\hat{x}\cdot y}f(y)dy, \ \hat{x} \in \mathbb{S}^{d-1}. \label{2.7}
\end{equation}
\par
The following lemma is given by the same argument as in Lemma 10.4 of \cite{D. Colton and R. Kress} or Proposition 2.4 of \cite{M. Salo3}.
\begin{lem}
Let $q \in L^{\infty}(\mathbb{R}^d)$ with compact support in $B_R \subset \mathbb{R}^d$ where some $R>0$. We define the Helglotz operator $H:L^{2}(\mathbb{S}^{d-1}) \to L^{2}(B_R(0))$ by
\begin{equation}
Hg(x):=\int_{\mathbb{S}^{d-1}}e^{ikx\cdot \theta}g(\theta)d\theta, \ x \in B_R, \label{2.8}
\end{equation}
and define the operator $T_q:L^{2}(B_R) \to L^{2}(B_R)$ by $T_qf:=f+w\Big|_{B_R(0)}$ where $w$ is a radiating solution such that   
\begin{equation}
\Delta w+k^2w+qw=-qf \ \mathrm{in} \ \mathbb{R}^d. \label{2.9}
\end{equation}
We define the subspace $V$ of $L^2(B_R)$ by 
\begin{equation}
V:=\overline{ \left\{v\big|_{B_R} ; v \in L^{2}(B_{R+1}), \ \Delta v + k^2v +qv=0 \ \mathrm{in} \ \mathrm{B}_{R+1} \right\}}^{\left\| \cdot \right\|_{L^2(B_R)}}.\label{2.10}
\end{equation}
Then, the range of the operator $T_q H$ is dense in $V$ with respect to the norm $\left\| \cdot \right\|_{L^2(B_R)}$, that is,
\begin{equation}
\overline{ T_qH\left(L^2(\mathbb{S}^{d-1})\right) }^{\left\| \cdot \right\|_{L^2(B_R)}}=V.\label{2.11}
\end{equation}
\end{lem}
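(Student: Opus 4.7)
The plan is a duality argument in the spirit of Lemma 10.4 of Colton--Kress and Proposition 2.4 of Salo. By the Hahn--Banach theorem, and using the bilinear pairing $\langle f, g\rangle := \int_{B_R} fg\, dx$ in place of the $L^2$ inner product (replacing $\psi$ by $\bar\psi$ at the end converts to the Hilbert-space statement), density reduces to the following implication: if $\psi \in L^2(B_R)$ satisfies $\langle T_q Hg, \psi\rangle = 0$ for every $g \in L^2(\mathbb{S}^{d-1})$, then $\langle v, \psi\rangle = 0$ for every $v$ in the defining set of $V$. The inclusion $T_q H(L^2(\mathbb{S}^{d-1})) \subseteq V$ is automatic, since $T_q Hg$ extends from $B_R$ to a global solution $\tilde u_g = Hg + w_g$ of $(\Delta + k^2 + q)\tilde u_g = 0$ in $\mathbb{R}^d$.

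Fix such a $\psi$, extend it by zero outside $B_R$, and introduce the unique radiating solution $\zeta(x) := \int_{B_R} \Phi_q(x,y)\psi(y)\, dy$ of $(\Delta + k^2 + q)\zeta = -\psi$ in $\mathbb{R}^d$, with far-field pattern $\zeta^\infty$. Applying Green's second identity on a large ball $B_\rho$ to the pair $(\tilde u_g, \zeta)$ and passing to $\rho \to \infty$ yields
\begin{equation*}
\langle T_q Hg, \psi\rangle = -\lim_{\rho\to\infty} \int_{\partial B_\rho} \bigl(\tilde u_g\, \partial_\nu\zeta - \zeta\, \partial_\nu \tilde u_g\bigr) ds.
\end{equation*}
The $w_g$-contribution vanishes because both $w_g$ and $\zeta$ satisfy the Sommerfeld radiation condition. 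For the $Hg$-contribution, one expands $Hg$ at infinity by stationary phase as the sum of an outgoing spherical wave (amplitude proportional to $g(\hat x)$) and an incoming one (amplitude proportional to $g(-\hat x)$); only the incoming part pairs nontrivially with the radiating $\zeta$, and a direct computation gives
\begin{equation*}
\langle T_q Hg, \psi\rangle = c_{d,k}\int_{\mathbb{S}^{d-1}} \zeta^\infty(\theta)\, g(-\theta)\, ds(\theta)
\end{equation*}
for a nonzero constant $c_{d,k}$. The hypothesis thus forces $\zeta^\infty \equiv 0$.

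Outside $B_R$ the function $\zeta$ satisfies the free Helmholtz equation together with the radiation condition and has vanishing far field, so Rellich's lemma gives $\zeta \equiv 0$ in $\mathbb{R}^d \setminus \bar B_R$. Elliptic regularity places $\zeta$ in $H^2_{\mathrm{loc}}(\mathbb{R}^d)$, whence the Cauchy data $\zeta|_{\partial B_R}$ and $\partial_\nu \zeta|_{\partial B_R}$ both vanish. For any $v \in L^2(B_{R+1})$ with $(\Delta + k^2 + q)v = 0$ in $B_{R+1}$, interior regularity puts $v$ in $H^2$ near $\partial B_R$, so Green's identity on $B_R$ collapses to
\begin{equation*}
\langle v, \psi\rangle = -\int_{B_R} v\,(\Delta + k^2 + q)\zeta\, dx = \int_{\partial B_R}\bigl(\zeta\,\partial_\nu v - v\,\partial_\nu \zeta\bigr) ds = 0,
\end{equation*}
as desired.

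The delicate step is the asymptotic evaluation in the second paragraph: because the Herglotz wave $Hg$ is \emph{not} Sommerfeld-radiating, one must retain the incoming spherical-wave component of its stationary-phase expansion in order to get a non-vanishing pairing with the radiating $\zeta$, and it is precisely this component that encodes $\zeta^\infty$ and makes the implication to Rellich possible. The remaining ingredients (existence of $\zeta$ from $\Phi_q$, Rellich's lemma, elliptic regularity up to $\partial B_R$, and Green's identity) are entirely standard.
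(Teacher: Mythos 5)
Your duality argument is correct and is essentially the proof the paper intends: the paper gives no proof of Lemma 2.1, deferring to Lemma 10.4 of Colton--Kress and Proposition 2.4 of P\"aiv\"arinta--Salo--Uhlmann, and your argument (annihilator $\psi$, auxiliary radiating solution $\zeta$ built from $\Phi_q$, identification of the pairing with $\int_{\mathbb{S}^{d-1}}\zeta^\infty(\theta)g(-\theta)\,ds(\theta)$, then Rellich and Green) is exactly that standard route. The only cosmetic improvement would be to replace the stationary-phase expansion of $Hg$ (delicate for general $g\in L^2(\mathbb{S}^{d-1})$) by the exact, $\rho$-independent far-field representation $\int_{\partial B_\rho}\bigl(e^{ikx\cdot\theta}\partial_\nu\zeta-\zeta\,\partial_\nu e^{ikx\cdot\theta}\bigr)ds=c_{d,k}\,\zeta^\infty(-\theta)$ for each fixed $\theta$, followed by Fubini in $\theta$.
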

The following result is well known. For $d=2$ we refer to \cite{A. Bukhgeim}, and for $d\geq 3$ we refer to \cite{J. Sylvester and G. Uhlmann}.
\begin{lem}
Let $f, q_1, q_2 \in L^{\infty}(\mathbb{R}^d)$ with compact support in $B_R \subset \mathbb{R}^d$. We assume that
\begin{equation}
\int_{B_R}fv_1v_2dx=0,\label{2.24}
\end{equation}
for all $v_1, v_2 \in L^{2}(B_{R+1})$ with $\Delta v_j +k^2v_j+q_jv_j=0$ in $B_{R+1}$. ($j=1,2$.) Then, $f=0$ in $B_R$.
\end{lem}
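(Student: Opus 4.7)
The plan is to invoke the by-now classical construction of complex geometrical optics (CGO) solutions, following Sylvester--Uhlmann in dimension $d\geq 3$ and Bukhgeim in dimension $d=2$. The idea is that the hypothesis (\ref{2.24}) is exactly the statement that $f$ (extended by zero outside $B_R$) is orthogonal to all products of solutions of the two Schr\"odinger equations; if we can exhibit enough such products to form a dense family in $L^2(B_R)$ (or to recover the Fourier transform of $f$), we conclude $f=0$.

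For $d\geq 3$, I would fix an arbitrary $\xi\in\mathbb{R}^d$ and choose vectors $\rho_1,\rho_2\in\mathbb{C}^d$ with
\begin{equation*}
\rho_j\cdot\rho_j=0,\qquad \rho_1+\rho_2=-i\xi,\qquad |\rho_j|\to\infty,
\end{equation*}
which is possible precisely because $d\geq 3$. Sylvester--Uhlmann's construction produces solutions $v_j\in L^2(B_{R+1})$ of $\Delta v_j+k^2 v_j+q_j v_j=0$ of the form $v_j(x)=e^{\rho_j\cdot x}(1+r_j(x))$ with $\|r_j\|_{L^2(B_{R+1})}=O(|\rho_j|^{-1})$; the remainder is obtained by solving a conjugated equation $(\Delta+2\rho_j\cdot\nabla)r_j=-(k^2+q_j)(1+r_j)$ via the weighted $L^2$ resolvent estimate for $\Delta+2\rho_j\cdot\nabla$. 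Plugging into (\ref{2.24}) gives
\begin{equation*}
\int_{B_R}f(x)e^{-i\xi\cdot x}(1+r_1(x))(1+r_2(x))\,dx=0,
\end{equation*}
and letting $|\rho_j|\to\infty$, the cross terms vanish by Cauchy--Schwarz and the decay of $r_j$, leaving $\widehat{f\chi_{B_R}}(\xi)=0$. Since $\xi$ was arbitrary, $f\equiv 0$.

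For $d=2$, two linear complex phases cannot be arranged to sum to $-i\xi$ while keeping $\rho_j\cdot\rho_j=0$, so I would instead use Bukhgeim's CGO solutions with a quadratic holomorphic phase: identifying $\mathbb{R}^2\simeq\mathbb{C}$ with coordinate $z$, fix $z_0\in B_R$ and set $\Phi(z)=(z-z_0)^2$, then construct $v_1(x)=e^{\Phi(z)/h}(1+r_1)$ and $v_2(x)=e^{-\overline{\Phi(z)}/h}(1+r_2)$ with $r_j\to 0$ in $L^2(B_R)$ as $h\to 0^+$; the remainders are built via $\bar\partial$-estimates that replace the Sylvester--Uhlmann estimates used above. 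Applying stationary phase to $\int f v_1 v_2\,dx$ around the non-degenerate critical point $z_0$ of $\mathrm{Re}\,\Phi$ then yields $f(z_0)=0$ for every $z_0\in B_R$.

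The main obstacle in both dimensions is the quantitative construction and control of the CGO remainders $r_j$: one needs $r_j\to 0$ in $L^2$ in the chosen asymptotic regime, and simultaneously fast enough that the cross terms $r_1+r_2+r_1r_2$ produce no residual contribution. In dimension two this is the more delicate step, relying on the $\bar\partial$ resolvent estimates of Bukhgeim; in dimension $\geq 3$ it reduces to the standard weighted estimate for $\Delta+2\rho\cdot\nabla$. Since both constructions are detailed in the cited references, I would simply invoke them and carry out only the short passage to the limit above.
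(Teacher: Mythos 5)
Your proposal is correct and matches the paper's treatment: the paper gives no proof of this lemma, simply citing Sylvester--Uhlmann for $d\geq 3$ and Bukhgeim for $d=2$, which are exactly the CGO constructions (linear phases with $\rho_1+\rho_2=-i\xi$, respectively the quadratic holomorphic phase with stationary phase) that you sketch. The only point worth noting is that the effective potential here is $k^2+q_j$ rather than $q_j$ alone, but since the construction only needs an $L^\infty$ potential on $B_{R+1}$ this changes nothing, and your conjugated equation already accounts for it.
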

\section{Proof of Theorem 1.2}
In Section 3, we will show Theorem 1.2 based on the Banach fixed point theorem. We denote the Herglotz wave function by
\begin{equation}
v_g(x):=\int_{\mathbb{S}^{d-1}}e^{-ikx\cdot \theta}g(\theta)ds(\theta), \ x \in \mathbb{R}^d, \ g \in L^{2}(\mathbb{S}^{d-1}). \label{3.0}
\end{equation}
Let $q:=\partial_{z}a(\cdot,0)$. We define the operator $T:L^{\infty}(\mathbb{R}^d) \to L^{\infty}(\mathbb{R}^d)$ by
\begin{eqnarray}
Tw(x)
&:=&\int_{\mathbb{R}^d}\Phi_q(x,y)\left[a\bigl(y,w(y)+v_g(y) \bigr)-q(y)w(y) \right]dy
\nonumber\\
&=&\int_{\mathbb{R}^d}\Phi_q(x,y)\left[\sum_{l\geq 2}\dfrac{\partial^{l}_{z}a(y,0)}{l!}\bigl(w(y)+v_g(y) \bigr)^l+q(y)v_g(y) \right]dy, \ x \in \mathbb{R}^d.\nonumber\\ \label{3.1}
\end{eqnarray}
Let $X_{\delta}:=\left\{u \in L^{\infty}(\mathbb{R}^d): \left\| u \right\|_{L^{\infty}(\mathbb{R}^d)} \leq \delta \right\}$. We remark that $L^{\infty}(\mathbb{R}^d)$ is a Banach space, and $X_{\delta}$ is closed subspace in $L^{\infty}(\mathbb{R}^d)$. To find an unique fixed point of $T$ in $X$, we will show that $T:X_{\delta} \to X_{\delta}$ and $T$ is a contraction. Let $w \in X_{\delta}$, and let $\delta \in (0,\delta_0)$, and let $\left\| g \right\|_{L^{\infty}(\mathbb{R}^d)}<\delta^2$. Later, we will choose a appropriate $\delta_0>0$. 
\par 
By $\left\| g \right\|_{L^{\infty}(\mathbb{R}^d)}<\delta^2$, we have
\begin{equation}
\left\|v_g \right\|_{L^{\infty}(\mathbb{R}^d)} \leq C \left\| g \right\|_{L^{\infty}(\mathbb{R}^d)} \leq C \delta^2
\end{equation}
where $C>0$ is constant only depending on $g$. By (iii) (iv) of Assumption 1.1, we have 
\begin{eqnarray}
|Tw(x)|
&\leq&\int_{B_R}|\Phi_q(x,y)|\left[\sum_{l\geq 2}\dfrac{c_0^l}{l!}\bigl(C_1 \delta \bigr)^l+C_1\delta^2 \right]dy
\nonumber\\
&\leq&C_2\delta^2  \left( \sum_{l\geq 0}\bigl(C_1 c_0 \delta \bigr)^l \right) \int_{B_R}|\Phi_q(x,y)|dy,\label{3.2}
\end{eqnarray}
where $C_j >0$ ($j=1,2$) is constant independent of $u$ and $\delta$, and so is $\left( \sum_{l\geq 0}\bigl(C_1 c_0 \delta \bigr)^l \right)$ when $\delta>0$ is sufficiently small. Furthermore, by the continuity of difference $\Phi(x,y)-\Phi_q(x,y)$ in $x$ and $y$ (see the proof of Theorem 31.6 in \cite{G. Eskin}), and the estimation (\ref{2.5}), we have for $x \in \mathbb{R}^d$
\begin{eqnarray}
\int_{B_R}|\Phi_q(x,y)|dy
&\leq&\int_{B_R}\bigl(|\Phi(x,y)|+|\Phi_q(x,y)-\Phi(x,y)| \bigr)dy
\nonumber\\
&\leq& \int_{B_R}\bigl(|\Phi(x,y)|+C_3 \bigr)dy
\leq C_4, \label{3.3}
\end{eqnarray}
which implies that $|Tw(x)| \leq C \delta^2$ where $C, C_j >0$ ($j=3,4$) is constant independent of $u$ and $\delta$. By choosing $\delta_0 \in (0, 1/C)$, we conclude that $\left\|Tw\right\|\leq \delta$, which means $Tw \in X_{\delta}$. 
\par
Let $w_1,w_2 \in X_{\delta}$. Since we have
\begin{eqnarray}
&&\bigl(w_1(y)+v_g(y) \bigr)^l-\bigl(w_2(y)+v_g(y) \bigr)^l
\nonumber\\
&=&\sum_{m=1}^{l}\frac{l!}{(l-m)!m!}\bigl(w^m_1(y)-w^m_2(y) \bigr)v_{g}^{l-m}(y)
\nonumber\\
&\leq&\sum_{m=1}^{l}\frac{l!}{(l-m)!m!}\left(\sum_{h=0}^{m-1}w^{m-1-h}_1(y)w^h_2(y) \right)\bigl( w_1(y)-w_2(y)\bigr)v_{g}^{l-m}(y), \nonumber\\ \label{3.4}
\end{eqnarray}
and $|w_j(x)|\leq \delta$, then 
\begin{eqnarray}
&&\left|Tw_1(x)-Tw_2(x)\right|
\nonumber\\
&=&\left| \int_{B_R}\Phi_q(x,y) \sum_{l\geq2}\dfrac{\partial^{l}_{z}a(y,0)}{l!} \left[\bigl(w_1(y)+v_g(y) \bigr)^l-\bigl(w_2(y)+v_g(y) \bigr)^l\right]dy \right|
\nonumber\\
&\leq& \left( \int_{B_R}\left|\Phi_q(x,y)\right|dy \right) \sum_{l\geq2}\dfrac{c_0^l}{l!} \sum_{m=1}^{l} \dfrac{l!}{(l-m)!m!} \left(\sum_{h=0}^{m-1}\delta^{m-1} \right) \left(C'_1\delta \right)^{l-m} \left\|w_1-w_2 \right\|_{L^{\infty}(\mathbb{R}^d)}
\nonumber\\
&\leq&C'_2 \sum_{l\geq2}\sum_{m=1}^{l} \frac{m}{(l-m)!m!}\left(c_0C'_1\delta \right)^{l-1} \left\|w_1-w_2 \right\|_{L^{\infty}(\mathbb{R}^d)}
\nonumber\\
&\leq&C'_2 \sum_{l\geq2}\left(\sum_{m=1}^{\infty} \frac{1}{(m-1)!} \right) \left(c_0C'_1\delta \right)^{l-1} \left\|w_1-w_2 \right\|_{L^{\infty}(\mathbb{R}^d)}
\nonumber\\
&\leq&C'_3\sum_{l\geq2}\left(c_0C'_1\delta \right)^{l-1}\left\|w_1-w_2 \right\|_{L^{\infty}(\mathbb{R}^d)}
\nonumber\\
&\leq&
C'_3 \left(\sum_{l\geq 0}\left(c_0C'_1\delta \right)^{l}\right)\delta \left\|w_1-w_2 \right\|_{L^{\infty}(\mathbb{R}^d)} 
\nonumber\\
&\leq&
C' \delta \left\|u_1-u_2 \right\|_{L^{\infty}(\mathbb{R}^d)},\ x \in \mathbb{R}^d. \label{3.5}
\end{eqnarray}
where $C',C'_j >0$ ($j=1,2,3$) is constant independent of $w_1,w_2$ and $\delta$. (We remark that $\left(\sum_{l\geq 0}\left(c_0C'_1\delta \right)^{l}\right)$ is also constant when $\delta>0$ is sufficiently small.) By choosing $\delta_0 \in (0, 1/C')$, we have $\left\|Tw_1-Tw_2 \right\|_{L^{\infty}(\mathbb{R}^d)}<\left\|w_1-w_2 \right\|_{L^{\infty}(\mathbb{R}^d)}$. Choosing sufficiently small $\delta_0 \in \left(0, \mathrm{min}(1/C,1/C') \right)$ we conclude that $T$ has a unique fixed point in $X_{\delta}$. 
\par
Let $w \in X_{\delta}$ be a unique fixed point, that is, $w$ satisfies
\begin{equation}
w(x)=\int_{\mathbb{R}^d}\Phi_q(x,y)\left[a\bigl(y,w(y)+v_g(y) \bigr)-q(y)w(y) \right]dy , \ x \in \mathbb{R}^d. \label{3.6}
\end{equation}
Since $\Phi_q(x,y)$ satisfy the Sommerfeld radiation condition (e.g., see Theorem 31.6 in \cite{G. Eskin}), $w$ is a radiating solution of $\Delta w+ a(x, w+v_g) + k^2w = 0$ in $\mathbb{R}^d$. By the same argument as in Theorem 8.3 of \cite{D. Colton and R. Kress}, this is equivalent to the integral equation
\begin{equation}
w(x)=\int_{\mathbb{R}^d}\Phi(x,y)a\bigl(y,w(y)+v_g(y) \bigr)dy , \ x \in \mathbb{R}^d, \label{3.7}
\end{equation}
which means (\ref{1.6}). Therefore, Theorem 1.2 has been shown.
\section{The special solution}
In Section 4, we consider the special solution of (\ref{1.3})--(\ref{1.4}) corresponding to the incident field with several parameters in order to linearize problems. Let $N \in \mathbb{N}$ be fixed and let $g_j \in L^2(\mathbb{S}^{d-1})$ be fixed ($j=1,2,...,N+1$). We set 
\begin{equation}
v_{\epsilon}:=\sum_{j=1}^{N+1}\epsilon_j \delta^2 v_{g_j}=v_{\left(\delta^2 \sum_{j=1}^{N+1}\epsilon_j g_j \right) }, \label{4.1}
\end{equation}
where $v_{g_j}$ is the Herglotz wave function defined by (\ref{1.2}), and $\epsilon_j \in (0,\delta)$. Later, we will choose a appropriate $\delta=\delta_{g_j, N}>0$. We remark that we can estimate that
\begin{equation}
\left\| v_{\epsilon} \right\|_{L^{\infty}(\mathbb{R}^d)} \leq C \delta^2 \sum_{j=1}^{N+1}\epsilon_j,\label{4.2}
\end{equation}
where $C>0$ is constant only depending on $g_j$. We denote by $\epsilon = (\epsilon_1, ..., \epsilon_{N+1}) \in \mathbb{R}^{N+1}$. We will find a small solution $u_\epsilon$ of (\ref{1.6}) that is of the form 
\begin{equation}
u_\epsilon = r_\epsilon + v_\epsilon. \label{4.3}
\end{equation} 
This problem is equivalent to 
\begin{equation}
r_\epsilon(x) = \int_{\mathbb{R}^d}\Phi_q(x,y)\left[a\bigl(y,r_\epsilon(y)+v_\epsilon(y) \bigr)-q(y)r_\epsilon(y) \right]dy, \ x \in \mathbb{R}^d,\label{4.4}
\end{equation} 
where $q:=\partial_{z}a(\cdot,0)$.
\par
We define the space for $\delta>0$ 
\begin{equation}
\tilde{X}_{\delta}:=\left\{r \in L^{\infty}(\mathbb{R}^d;C^{N+1}(0,\delta)^{N+1}) ;
\begin{array}{cc}
      \mathrm{ess.sup}_{x \in \mathbb{R}^d}|r(x,\epsilon)|\leq  \sum_{j=1}^{N+1}\epsilon_j, \\
      \left\| r \right\|_{L^{\infty}(\mathbb{R}^d;C^{N+1}(0,\delta)^{N+1})} \leq \delta,
\end{array}
\right\}, \label{4.5}
\end{equation}
where the norm $\left\| \cdot \right\|_{L^{\infty}(\mathbb{R}^d;C^{N+1}(0,\delta)^{N+1})}$ is defined by
\begin{equation}
\left\| r \right\|_{L^{\infty}(\mathbb{R}^d;C^{N+1}(0,\delta)^{N+1})}:=\sum_{|\alpha|\leq N+1}\mathrm{sup}_{\epsilon \in (0,\delta)^{N+1}}\mathrm{ess.sup}_{x \in \mathbb{R}^d}\left| \partial_{\epsilon}^{\alpha}r(x,\epsilon) \right|. \label{4.6}
\end{equation}
We remark that $L^{\infty}(\mathbb{R}^d;C^{N+1}(0,\delta)^{N+1})$ is a Banach space, and $\tilde{X}_{\delta}$ is closed subspace in $L^{\infty}(\mathbb{R}^d;C^{N+1}(0,\delta)^{N+1})$. We will show that following lemma in the same way of Theorem 1.2.
\begin{lem}
We assume that $a(x,z)$ satisfies Assumption 1.1. Then, there exists $\tilde{\delta}_0=\tilde{\delta}_{0, g_j, N} \in (0,1)$  such that for all $\delta \in (0, \tilde{\delta}_0)$ there exists an unique solution $r \in \tilde{X}_{\delta}$ such that
\begin{equation}
r(x,\epsilon)=\int_{\mathbb{R}^d}\Phi_q(x,y)\left[a\bigl(y, r(y,\epsilon)+v_{\epsilon}(y) \bigr)-q(y)r(y,\epsilon) \right]dy, \ x \in \mathbb{R}^d, \ \epsilon \in (0,\delta)^{N+1}.\label{4.7}
\end{equation}
\end{lem}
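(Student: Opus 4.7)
The plan is to mimic the Banach fixed point argument used in Section~3 to prove Theorem~1.2, but now working on the larger space $\tilde X_\delta$ that also records up to $N+1$ derivatives in the parameter $\epsilon$. Define the operator
\begin{equation*}
\tilde{T}r(x,\epsilon):=\int_{\mathbb{R}^d}\Phi_q(x,y)\left[a\bigl(y,r(y,\epsilon)+v_\epsilon(y)\bigr)-q(y)r(y,\epsilon)\right]dy,
\end{equation*}
and reduce the lemma to verifying (a) $\tilde T(\tilde X_\delta)\subset \tilde X_\delta$, and (b) $\tilde T$ is a strict contraction in the norm $\|\cdot\|_{L^\infty(\mathbb{R}^d;C^{N+1}((0,\delta)^{N+1}))}$, after possibly shrinking $\tilde\delta_0$.

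For the self-mapping property I would check the two defining constraints of $\tilde X_\delta$ in turn. First, using $|r(y,\epsilon)|\le\sum_j\epsilon_j$ together with $\|v_\epsilon\|_{L^\infty}\le C\delta^2\sum_j\epsilon_j$ from (4.2), the Taylor expansion of $a$, and the uniform bound $\int_{B_R}|\Phi_q(x,y)|dy\le C_4$ from (3.3), the same geometric-series computation as in (3.2) yields $|\tilde Tr(x,\epsilon)|\le C\delta\sum_j\epsilon_j$, which is $\le\sum_j\epsilon_j$ for $\delta$ small. Second, since $v_\epsilon$ is linear in $\epsilon$, its derivatives satisfy $\partial_{\epsilon_j}v_\epsilon=\delta^2 v_{g_j}$ and $\partial_\epsilon^\alpha v_\epsilon=0$ for $|\alpha|\ge 2$; differentiating under the integral and expanding $\partial_\epsilon^\alpha a(y,r+v_\epsilon)$ by the Faà di Bruno formula produces a finite sum of products of $\partial_z^l a(y,0)$-factors (bounded by $c_0^l$ on $B_R$) with factors $\partial_\epsilon^{\beta_i}(r+v_\epsilon)$, each controlled by the $\tilde X_\delta$-norm of $r$ and by $\|v_{g_j}\|_{L^\infty}$. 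Summing over $|\alpha|\le N+1$ and absorbing the resulting series $\sum_l (c_0 C\delta)^l$ gives $\|\tilde Tr\|_{L^\infty(\mathbb{R}^d;C^{N+1})}\le C_{N,g_j}\delta\le\delta$ after shrinking $\tilde\delta_0$.

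For the contraction property, applying the telescoping identity (3.4) to $(r_1+v_\epsilon)^l-(r_2+v_\epsilon)^l$ and then the same Faà di Bruno bookkeeping to its $\epsilon$-derivatives yields
\begin{equation*}
\|\tilde Tr_1-\tilde Tr_2\|_{L^\infty(\mathbb{R}^d;C^{N+1})}\le C'\delta\,\|r_1-r_2\|_{L^\infty(\mathbb{R}^d;C^{N+1})}
\end{equation*}
along the chain of estimates in (3.5). Choosing $\tilde\delta_0=\tilde\delta_{0,g_j,N}>0$ smaller than the thresholds coming from (a), (b), and the contraction factor then delivers a unique fixed point in $\tilde X_\delta$ by the Banach fixed point theorem. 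The principal obstacle will be the combinatorial bookkeeping for the $C^{N+1}$-derivatives through the nonlinearity: one must verify that every summand in the Faà di Bruno expansion of $\partial_\epsilon^\alpha[a(y,r+v_\epsilon)]$ is uniformly bounded in $\epsilon\in(0,\delta)^{N+1}$, and that the resulting multi-sums over $l$ and over the partitions of $|\alpha|$ still collapse into a convergent geometric series in $\delta$ with a constant depending only on $N$ and $\{g_j\}$.
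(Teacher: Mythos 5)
Your proposal is correct and follows essentially the same route as the paper: the paper also runs the Banach fixed point argument for $\tilde T$ on $\tilde X_\delta$, verifying the two defining constraints separately and then the contraction estimate, with the only cosmetic difference being that it expands $a(y,r+v_\epsilon)$ via its power series and the binomial theorem before differentiating in $\epsilon$ (using Leibniz-type product bounds such as (4.11) and (4.15)) rather than invoking Fa\`a di Bruno on the composition directly. The combinatorial bookkeeping you flag as the principal obstacle is exactly what the paper's estimates (4.9)--(4.17) carry out.
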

\begin{proof}
We define the operator $\tilde{T}$ from $L^{\infty}(\mathbb{R}^d;C^{N+1}(0,\delta)^{N+1})$ into itself by
\begin{eqnarray}
\tilde{T}r(x,\epsilon)
&:=&\int_{\mathbb{R}^d}\Phi_q(x,y)\left[a\bigl(y,r(y,\epsilon)+v_{\epsilon}(y) \bigr)-q(y)r(y,\epsilon) \right]dy
\nonumber\\
&=&\int_{\mathbb{R}^d}\Phi_q(x,y)\left[\sum_{l\geq 2}\dfrac{\partial^{l}_{z}a(y,0)}{l!}\bigl(r(y,\epsilon)+v_{\epsilon}(y) \bigr)^l+q(y)v_{\epsilon}(y) \right]dy
\nonumber\\
&=&\int_{\mathbb{R}^d}\Phi_q(x,y)\left[\sum_{l\geq 2}\dfrac{\partial^{l}_{z}a(y,0)}{l!}\sum_{m=0}^{l}\frac{l!}{(l-m)!m!}r^{l-m}(y,\epsilon)v^m_{\epsilon}(y) +q(y)v_{\epsilon}(y) \right]dy
\nonumber\\ \label{4.8}
\end{eqnarray}
Let $r \in \tilde{X}_{\delta}$. With (\ref{4.2}) we have
\begin{eqnarray}
&&\left|\tilde{T}r(x,\epsilon)\right|
\nonumber\\
&\leq&\left( \int_{B_R} \left|\Phi_q(x,y)\right| dy\right) \left[\sum_{l\geq 2}c_0^l \sum_{m=0}^{l} \frac{1}{m!}\left(\sum_{j=1}^{N+1}\epsilon_j \right)^{l-m}\left(\tilde{C}_1 \delta^2 \sum_{j=1}^{N+1}\epsilon_j \right)^{m} + \tilde{C}_1 \delta^2 \sum_{j=1}^{N+1}\epsilon_j \right]
\nonumber\\
&\leq& \tilde{C}_2 \left[\sum_{l\geq 2}c_0^l \left( \sum_{m=0}^{\infty} \frac{\tilde{C}_1^m}{m!} \right) \left(\sum_{j=1}^{N+1}\epsilon_j \right)^{l} + \tilde{C}_1 \delta^2 \sum_{j=1}^{N+1}\epsilon_j \right]
\nonumber\\
&\leq& \tilde{C}_3 \left(\sum_{j=1}^{N+1}\epsilon_j \right)^2 \sum_{l\geq 2}c_0^l \left(\sum_{j=1}^{N+1}\epsilon_j \right)^{l-2} + \tilde{C}_3 \delta \left(\sum_{j=1}^{N+1}\epsilon_j \right)  
\nonumber\\
&\leq& \tilde{C} \delta \left(\sum_{j=1}^{N+1}\epsilon_j \right),\label{4.9}
\end{eqnarray}
where $\tilde{C}, \tilde{C}_j >0$ ($j=1,2$) is constant independent of $r$, $\delta$, $\epsilon$ (but, depending on $g_j$ and $N$). Furthermore, we consider for $\alpha \in \mathbb{N}^{N+1}$ with $|\alpha|\leq N+1$
\begin{eqnarray}
&&\partial_{\epsilon}^{\alpha} \tilde{T}r(x,\epsilon)
\nonumber\\
&=&\int_{\mathbb{R}^d}\Phi_q(x,y)\partial_{\epsilon}^{\alpha} \left[\sum_{l\geq 2}\dfrac{\partial^{l}_{z}a(y,0)}{l!}\sum_{m=0}^{l}\frac{l!}{(l-m)!m!}r^{l-m}(y,\epsilon)v^m_{\epsilon}(y) +q(y)v_{\epsilon}(y) \right]dy.
\nonumber\\ \label{4.10}
\end{eqnarray}
Since $|\partial_{\epsilon_j}v_{\epsilon}(x)|\leq \tilde{C}'_1 \delta^2$ and $|\partial^{\alpha}_{\epsilon}r^{l-m}(x,\epsilon) v^m_{\epsilon}(x)|\leq \tilde{C}'_2  (l-m)! m!  \delta^{l-m} (\tilde{C}'_2\delta^2)^m$, we have
\begin{eqnarray}
\left| \partial_{\epsilon}^{\alpha} \tilde{T}r(x,\epsilon)\right|
&\leq&\left( \int_{B_R} \left|\Phi_q(x,y)\right| dy\right) \left[\sum_{l\geq 2}\dfrac{c_0^l}{l!} \sum_{m=0}^{l}\frac{l!m! (l-m)!}{(l-m)!m!} \delta^{l+m}(\tilde{C}'_2)^m +\tilde{C}'_3 \delta^2 \right]
\nonumber\\
&\leq& \tilde{C}'_4 \delta^2 \left( \sum_{l\geq2} (c_0\delta)^{(l-2)} \sum_{m=0}^{\infty}(\tilde{C}'_2 \delta)^m \right) + \tilde{C}'_4 \delta^2 \leq \tilde{C}'_5 \delta^2, \label{4.11}
\end{eqnarray}
where $\tilde{C}'_j >0$ ($j=3,4,5$) is also constant independent of $r$, $\delta$, $\epsilon$ (but depending on $\alpha$). Then, we have
\begin{equation}
\sum_{|\alpha|\leq N+1}\mathrm{sup}_{\epsilon \in (0,\delta)^{N+1}}\mathrm{ess.sup}_{x \in \mathbb{R}^d}\left| \partial_{\epsilon}^{\alpha}\tilde{T}r(x,\epsilon) \right| \leq \tilde{C}' \delta ^2,\label{4.12}
\end{equation}
where $\tilde{C}'$ is constant independent of $r$, $\delta$, $\epsilon$. (depending on $g_j$ and $N$.) By choosing $\tilde{\delta}_{0} \in \left(0, \mathrm{min}(1/\tilde{C},1/\tilde{C}') \right)$, we conclude that $\tilde{T}r \in \tilde{X}_{\delta}$.
\par
Let $r_1,r_2 \in \tilde{X}_{\delta}$. By similar argument in (\ref{3.4}) we have 
\begin{eqnarray}
&&\tilde{T}r_1(x,\epsilon)-\tilde{T}r_2(x,\epsilon)
\nonumber\\
&=& \int_{B_R}\Phi_q(x,y) \sum_{l\geq2}\dfrac{\partial^{l}_{z}a(y,0)}{l!} \left[\bigl(r_1(y,\epsilon)+v_{\epsilon}(y) \bigr)^l-\bigl(r_2(y, \epsilon)+v_{\epsilon}(y) \bigr)^l\right]dy 
\nonumber\\
&=& \int_{B_R}\Phi_q(x,y) \sum_{l\geq2}\dfrac{\partial^{l}_{z}a(y,0)}{l!}\sum_{m=1}^{l}\frac{l!}{(l-m)!m!}v^{l-m}_{\epsilon}(y)
\nonumber\\
&& \ \ \ \ \ \ \ \ \ \ \ \ \ \ \ \ \ \ \ \times 
\sum_{h=0}^{m-1} r^{m-1-h}_1(y,\epsilon)r^{h}_2(y, \epsilon)\left(r_1(y,\epsilon)-r_2(y,\epsilon) \right)dy.
\nonumber\\ \label{4.13}
\end{eqnarray}
Then, we have for $\alpha \in \mathbb{N}^{N+1}$ with $|\alpha|\leq N+1$
\begin{eqnarray}
&&\left| \partial_{\epsilon}^{\alpha} \left( \tilde{T}r_1(x)-\tilde{T}r_2(x) \right) \right|
\nonumber\\
&\leq& \int_{B_R} \left|\Phi_q(x,y)\right|  \sum_{\beta \leq \alpha} \frac{\alpha !}{ (\alpha - \beta)! \beta !} \sum_{l\geq 2}\dfrac{\left|\partial^{l}_z a(y,0)\right|}{l!} \sum_{m=1}^{l}\frac{l!}{(l-m)!m!}
\nonumber\\
&\times& \sum_{h=0}^{m-1} \left| \partial_{\epsilon}^{\beta}\left( v_{\epsilon}^{l-m}(y)r^{m-1-h}_1(y,\epsilon)r^{h}_2(y,\epsilon) \right) \right|  \left|\partial_{\epsilon}^{\alpha - \beta} \left( r_1(y,\epsilon)-r_2(y,\epsilon) \right)\right|dy.
\nonumber\\ \label{4.14}
\end{eqnarray}
Since 
\begin{equation}
\left| \partial_{\epsilon}^{\beta}\left( v_{\epsilon}^{l-m}(y)r^{m-1-h}_1(y,\epsilon)r^{h}_2(y,\epsilon) \right) \right| \leq \tilde{C}''_1 (l-m)!(m-1-h)!h! (\tilde{C}''_1 \delta^2)^{l-m}\delta^{m-1-h}\delta^{h}, \label{4.15}
\end{equation}
where $\tilde{C}''_1$ is constant independent of $r_1,r_2$ and $\delta$ (depending on $\beta$), we have that
\begin{eqnarray}
&&\left| \partial_{\epsilon}^{\alpha} \left( \tilde{T}r_1(x)-\tilde{T}r_2(x) \right) \right|
\nonumber\\
&\leq& \tilde{C}''_2 \left( \sum_{\beta \leq \alpha} \frac{\alpha !}{ (\alpha - \beta)! \beta !} \sum_{l\geq 2}\frac{c_0^l}{l!} \sum_{m=1}^{l} \sum_{h=0}^{m-1} \frac{l!(l-m)!(m-1-h)!h!}{(l-m)!m!} \delta^{2l-m-1} (\tilde{C}''_1)^{l-m} \right) \left\| r_1-r_2 \right\|
\nonumber\\
&\leq& \tilde{C}''_3 \delta \left( \sum_{l\geq2}(c_0\delta)^{l-2} \sum_{m=1}^{l} (\tilde{C}''_1 \delta)^{l-m}  \sum_{h=0}^{m-1} \frac{(m-1-h)!h!}{m!} \right) \left\| r_1-r_2 \right\|
\nonumber\\
&\leq& \tilde{C}''_4 \delta \left( \sum_{l\geq2}(c_0\delta)^{l-2} \sum_{p=0}^{\infty} (\tilde{C}''_1 \delta)^{p} \right) \left\| r_1-r_2 \right\| \leq \tilde{C}''_5 \delta \left\| r_1-r_2 \right\|_{L^{\infty}(\mathbb{R}^d;C^{N+1}(0,\delta)^{N+1})}, \label{4.16}
\end{eqnarray}
which implies that
\begin{equation}
\sum_{|\alpha|\leq N+1}\mathrm{sup}_{\epsilon \in (0,\delta)^{N+1}}\mathrm{ess.sup}_{x \in \mathbb{R}^d}\left| \partial_{\epsilon}^{\alpha} \left( \tilde{T}r_1(x,\epsilon)-\tilde{T}r_2(x,\epsilon) \right) \right| \leq \tilde{C}'' \delta \left\| r_1-r_2 \right\|, \label{4.17}
\end{equation}
where $\tilde{C}''_j,\tilde{C}''>0$ ($j=2,3,4$) is constant independent of $r_1,r_2$ and $\delta$. By choosing $\tilde{\delta}_0 \in \left( 0, \mathrm{min}(1/\tilde{C},1/\tilde{C}',1/\tilde{C}'')\right)$, we have $\left\|Tr_1-Tr_2 \right\|<\left\|r_1-r_2 \right\|$, which implies that $\tilde{T}$ has a unique fixed point in $\tilde{X}_{\delta}$. Lemma 4.1 has been shown.
\end{proof}
\section{Proof of Theorem 1.3}
In Section 5, we will show Theorem 1.3. Since $a(x,z)$ is holomorphic at $z=0$ by (ii) of Assumption 1.1, it is sufficient to show that 
\begin{equation}
\partial_{z}^{l}a_1(x,0)=\partial_{z}^{l}a_2(x,0), \ x \in \mathbb{R}^d,\label{5.0}
\end{equation}
for all $l \in \mathbb{N}$. Let $N \in \mathbb{N}$ and let $g_j \in L^2(\mathbb{S}^{d-1})$ ($j=1,2,...,N+1$). Let $\delta \in \left(0, \mathrm{min}(\delta_0, \tilde{\delta}_0) \right)$ be chosen as sufficiently small and depending on $N$ and $g_j$. ($\delta_0, \tilde{\delta}_0$ are corresponding to Theorem 1.2 and Lemma 4.1, respectively.) From Section 4, we obtain the unique solution $r_{\epsilon,j} \in \tilde{X}_{\delta}$ ($j=1,2$) such that
\begin{equation}
\Delta r_{\epsilon,j}+a_j(x, r_{\epsilon,j}+v_{\epsilon})+k^2r_{\epsilon,j}=0 \ \mathrm{in} \ \mathbb{R}^d, \label{5.1}
\end{equation}
where $r_{\epsilon,j}$ satisfies the Sommerfeld radiation, and $v_{\epsilon}$ is given by (\ref{4.1}). The solution $r_{\epsilon,j}$ has the form
\begin{equation}
r_{\epsilon,j}(x)=\int_{\mathbb{R}^d}\Phi(x,y)a_j(y, r_{\epsilon,j}(y)+v_{\epsilon}(y))dy, \ x \in \mathbb{R}^{d}, \ \epsilon \in (0,\delta)^{N+1}.\label{5.2}
\end{equation}
By the assumption of Theorem 1.3 we have 
\begin{equation}
r^{\infty}_{\epsilon,1}(\hat{x})=r^{\infty}_{\epsilon,2}(\hat{x}), \ \hat{x} \in \mathbb{S}^{d-1}, \ \epsilon \in (0,\delta)^{N+1},\label{5.3}
\end{equation}
where $r^{\infty}_{\epsilon,j}$ is a scattering amplitude for $r_{\epsilon,j}$, and it has the form
\begin{equation}
r^{\infty}_{\epsilon,j}(\hat{x})=\int_{\mathbb{R}^d}e^{-ik\hat{x}\cdot y}a_j(y, r_{\epsilon,j}(y)+v_{\epsilon}(y))dy, \ \hat{x} \in \mathbb{S}^{d-1}, \ \epsilon \in (0,\delta)^{N+1}.\label{5.4}
\end{equation}
In order to linearize (\ref{5.2}), we will differentiate it with respect to $\epsilon_l$ ($l=1,...,N+1$), which is possible because $r_{\epsilon,j} \in \tilde{X}_{\delta}$. Then, we have
\begin{equation}
\partial_{\epsilon_l}r_{\epsilon,j}(x)=\int_{\mathbb{R}^d}\Phi(x,y)\partial_{z}a_j(y, r_{\epsilon,j}(y)+v_{\epsilon}(y))(\partial_{\epsilon_l}r_{\epsilon,j}(y)+\delta^2 v_{g_l}(y))dy.\label{5.5}
\end{equation}
As $\epsilon \to +0$ we have by setting $q_j:=\partial_{z}a_j(y, 0)$
\begin{equation}
w_{l,j}(x):=\partial_{\epsilon_l}r_{\epsilon,j}\Big|_{\epsilon=0}(x)=\int_{\mathbb{R}^d}\Phi(x,y)q_j(y)(w_{l,j}(y)+\delta^2 v_{g_l}(y))dy, \label{5.6}
\end{equation}
which implies that 
\begin{equation}
\Delta w_{l,j}+k^2w_{l,j}=-q_j(w_{l,j}+\delta^2 v_{g_l}) \ \mathrm{in} \ \mathbb{R}^d. \label{5.7}
\end{equation}
By setting $u_{l,j}:=w_{l,j}+\delta^2 v_{g_l}$ we have
\begin{equation}
\Delta u_{l,j} +k^2 u_{l,j} +q_j u_{l,j}=0 \ \mathrm{in} \ \mathbb{R}^d. \label{5.8}
\end{equation}
By setting $u_l :=u_{l,1}-u_{l,2}(=w_{l,1}-w_{l,2})$ we have 
\begin{equation}
\Delta u_l +k^2 u_l +q_1 u_l= (q_2-q_1)u_{l,2} \ \mathrm{in} \ \mathbb{R}^d, \label{5.9}
\end{equation}
and we also have
\begin{equation}
(q_2 - q_1)u_{h,1}u_{l,2} = u_{h,1}\Delta u_l - u_l \Delta u_{h,1} \ \mathrm{in} \ \mathbb{R}^d. \label{5.10}
\end{equation}
Differentiating (\ref{5.3}) with respect to $\epsilon_l$ and as $\epsilon \to 0$ we have
\begin{equation}
\int_{\mathbb{R}^d}e^{-ik\hat{x}\cdot y}q_1(y)(w_{l,1}(y)+\delta^2 v_{g_l}(y))dy=\int_{\mathbb{R}^d}e^{-ik\hat{x}\cdot y}q_2(y)(w_{l,2}(y)+\delta^2 v_{g_l}(y))dy, \label{5.11}
\end{equation}
which means that $w_{l,1}^{\infty}=w_{l,2}^{\infty}$, where $w_{l,j}^{\infty}$ is a scattering amplitude of $w_{l,j}$. By setting $\hat{w}_l:=w_{l,1}-w_{l,2}$ we have
\begin{equation}
\Delta \hat{w}_l +k^2 \hat{w}_l = 0 \ \mathrm{in} \ \mathbb{R}\setminus \overline{B_R}, \label{5.12}
\end{equation}
where $\hat{w}_l$ satisfies the Sommerfeld radiation condition, and the scattering amplitude $\hat{w}_l^{\infty}$ of $\hat{w}_l$ vanishes. Then, we have $\hat{w}_l=0$ (that is, $u_l=0$) in $\mathbb{R}\setminus \overline{B_R}$, which implies that by the Green's second theorem we have ($l,h=1,...,N+1$)
\begin{eqnarray}
0&=&\int_{\partial B_{R+1}}u_{h,1}\partial_{\nu}u_l - u_l\partial_{\nu}u_{h,1} ds 
\nonumber\\
&=& \int_{B_{R+1}}u_{h,1}\Delta u_l - u_l \Delta u_{h,1}dx
\nonumber\\
&=&
\int_{B_{R}}(q_2 - q_1)u_{h,1} u_{l,2}dx. \label{5.13}
\end{eqnarray}
By (\ref{5.7}), and definition of $H$ and $T_{q_j}$ in Section 2, $u_{l,j}$ can be of the form
\begin{equation}
u_{l,j}=\delta^2 T_{q_j}Hg_l, \label{5.14}
\end{equation}
and dividing by $\delta^4>0$,
\begin{equation}
0=\int_{B_R}(q_2-q_1) T_{q_1}Hg_h T_{q_2}Hg_ldx. \label{5.15}
\end{equation}
Combining Lemma 2.1 with Lemma 2.2, we conclude that $q_1=q_2$.
\par
By induction, we will show (\ref{5.0}). In the first part of this section, the case of $l=1$ has been shown. We assume that 
\begin{equation}
\partial_{z}^{l}a_1(x,0)=\partial_{z}^{l}a_2(x,0), \label{5.16}
\end{equation}
for all $l=1,2,...,N$. We will show the case of $l=N+1$. We alredy have shown that $q_1=q_2$ and $w_{l,1}^{\infty}=w_{l,2}^{\infty}$, which implies that by the uniqueness of the linear Schr\"{o}dinger equation (\ref{5.7}) we have
\begin{equation}
w_{l,1}=w_{l,2} \ \mathrm{in} \ \mathbb{R}^d, \label{5.17}
\end{equation}
for all $l=1,...,N+1$.
\par
We set $q:=q_1=q_2$ and $w_l:=w_{l,1}=w_{l,2}$. By subinduction we will show that for all $h \in \mathbb{N}$ with $1 \leq h \leq N$
\begin{equation}
\partial^h_{\epsilon_{l_1}...\epsilon_{l_h}}r_{\epsilon,1}\Big|_{\epsilon=0}=\partial^h_{\epsilon_{l_1}...\epsilon_{l_h}}r_{\epsilon,2}\Big|_{\epsilon=0}, \label{5.18}
\end{equation}
where $l_1,...l_h \in \{1,...,N+1\}$. We already have shown that (\ref{5.18}) holds for $h=1$. We assume that (\ref{5.18}) holds for all $h\leq K\leq N-1$. (If $N=1$, this subinduction is skipped.) By differentiating (\ref{5.2}) with respect to $\partial^{K+1}_{\epsilon_{l_1}...\epsilon_{l_{K+1}}}$ we have
\begin{eqnarray}
&&\hspace{-1.3cm} \partial^{K+1}_{\epsilon_{l_1}...\epsilon_{l_{K+1}}}r_{\epsilon,j}(x)=\int_{\mathbb{R}^d}\Phi(x,y) \Biggl\{ \partial^{K+1}_{z} a_j(y, r_{\epsilon,j}(y)+v_{\epsilon}(y))\prod_{h=1}^{K+1}(\partial_{\epsilon_{l_h}}r_{\epsilon,j}(y)+\delta^2 v_{g_{l_h}}(y))
\nonumber\\
&& \ \ \ \ \ \ \ \ \ \ \ \ \ \ 
+\partial_{z} a_j(y, r_{\epsilon,j}(y)+v_{\epsilon}(y))\partial^{K+1}_{\epsilon_{l_1}...\epsilon_{l_{K+1}}}r_{\epsilon,j}(y)+R_{K,j}(y,\epsilon)\Bigg\} dy, \label{5.19}
\end{eqnarray} 
where $R_{K,j}(y,\epsilon)$ is a polynomial of $\partial^{h}_{z} a_j(y, r_{\epsilon,j}(y)+v_{\epsilon}(y))$ and $\partial^{h}_{\epsilon_{l_1}...\epsilon_{l_{h}}}\left(r_{\epsilon,j}(y)+v_{\epsilon}(y) \right)$ for $1\leq h \leq K$. As $\epsilon \to 0$ we have
\begin{eqnarray}
&&\hspace{-1cm}\partial^{K+1}_{\epsilon_{l_1}...\epsilon_{l_{K+1}}}r_{\epsilon,j}\Big|_{\epsilon=0}(x)=\int_{\mathbb{R}^d}\Phi(x,y) \Biggl\{ \partial^{K+1}_{z} a_j(y, 0)\prod_{h=1}^{K+1}(w_{l_h}(y)+\delta^2 v_{g_{l_h}}(y))
\nonumber\\
&& \ \ \ \ \ \ \ \ \ \ \ \ \ \ \ \ \ \ \ \ \ \  
+q(y)\partial^{K+1}_{\epsilon_{l_1}...\epsilon_{l_{K+1}}}r_{\epsilon,j}\Big|_{\epsilon=0}(y)+R_{K,j}(y,0) \Biggr\}dy.\label{5.20} 
\end{eqnarray} 
We set $\tilde{w}_{K+1,j}:=\partial^{K+1}_{\epsilon_{l_1}...\epsilon_{l_{K+1}}}r_{\epsilon,j}\Big|_{\epsilon=0}$ and set $\tilde{w}_{K+1}:=\tilde{w}_{K+1,1}-\tilde{w}_{K+1,2}$. By assumptions of induction and subinduction we have $R_{K,1}(y,0)=R_{K,2}(y,0)$ and $\partial^{K+1}_{z} a_1(\cdot, 0)=\partial^{K+1}_{z} a_2(\cdot, 0)$, which implies that
\begin{equation}
\tilde{w}_{K+1}(x)=\int_{\mathbb{R}^d}\Phi(x,y)q(y)\tilde{w}_{K+1}(y)dy,\label{5.21} 
\end{equation}
which is equivalent to 
\begin{equation}
\Delta \tilde{w}_{K+1}+k^2\tilde{w}_{K+1}+q\tilde{w}_{K+1}=0 \ \mathrm{in} \ \mathbb{R}^d,\label{5.22} 
\end{equation}
where $\tilde{w}_{K+1}$ satisfies Sommerfeld radiation condition. By differentiating (\ref{5.3}) with respect to $\partial^{K+1}_{\epsilon_{l_1}...\epsilon_{l_{K+1}}}$ and as $\epsilon \to 0$ we have
\begin{equation}
\tilde{w}_{K+1,1}^{\infty}=\tilde{w}_{K+1,2}^{\infty},\label{5.23} 
\end{equation}
where $\tilde{w}_{K+1,j}^{\infty}$ is a scattering amplitude of $\tilde{w}_{K+1,j}$. (\ref{5.23}) means that $\tilde{w}_{K+1}^{\infty}=0$, which implies that by Rellich theorem, we conclude that $\tilde{w}_{K+1}=0$ in $\mathbb{R}^d$. (\ref{5.18}) for the case of $K+1$ has been shown, and the claim (\ref{5.18}) holds for all $h=1,...,N$ by subinduction.
\par
By differentiating (\ref{5.2}) with respect to $\partial^{N+1}_{\epsilon_{1}...\epsilon_{K+1}}$, and as $\epsilon \to 0$ (the same argument in (\ref{5.19})--(\ref{5.21})) we have
\begin{equation}
\tilde{w}_{N+1}(x)=\int_{\mathbb{R}^d}\Phi(x,y) \bigg\{ \left( \partial_{z}^{N+1}a_1(x,0)-\partial_{z}^{N+1}a_2(x,0) \right)\prod_{h=1}^{N+1}(w_{h}(y)+\delta^2 v_{g_{h}}(y)) \nonumber
\end{equation}
\vspace{-5mm}
\begin{equation}
\ \ \ \ \ \ \ \ \ \ \ +q(y)\tilde{w}_{N+1}(y) \bigg\}dy.\label{5.24} 
\end{equation}
where $\tilde{w}_{N+1,j}:=\partial^{N+1}_{\epsilon_{1}...\epsilon_{l_{N+1}}}r_{\epsilon,j}\Big|_{\epsilon=0}$ and set $\tilde{w}_{N+1}:=\tilde{w}_{N+1,1}-\tilde{w}_{N+1,2}$. This is equivalent to
\begin{equation}
\Delta \tilde{w}_{N+1}+k^2\tilde{w}_{N+1}+q\tilde{w}_{N+1}=-f\prod_{h=1}^{N+1}\delta^2 T_q Hg_h \ \mathrm{in} \ \mathbb{R}^d,\label{5.25} 
\end{equation}
where $f(x):=\partial_{z}^{N+1}a_1(x,0)-\partial_{z}^{N+1}a_2(x,0)$. By differentiating (\ref{5.3}) with respect to $\partial^{N+1}_{\epsilon_{1}...\epsilon_{K+1}}$ and as $\epsilon \to 0$ (the same argument in (\ref{5.23})) we have 
\begin{equation}
\tilde{w}_{N+1}^{\infty}=0, \label{5.26} 
\end{equation}
where $\tilde{w}_{N+1}^{\infty}$ is a scattering amplitude of $\tilde{w}_{N+1}$. Then, we have $\tilde{w}_{N+1}=0$ in $\mathbb{R}\setminus \overline{B_R}$. 
\par
Let $ \tilde{v} \in L^{2}(B_{R+1})$ be a solution of $\Delta \tilde{v}+k^2\tilde{v}+q\tilde{v}=0$ in $B_{R+1}$. By the Green's second theorem and (\ref{5.25}) we have
\begin{eqnarray}
0&=&\int_{\partial B_{R+1}}\tilde{v}\partial_{\nu}\tilde{w}_{N+1} - \tilde{v}\partial_{\nu}\tilde{w}_{N+1}ds 
\nonumber\\
&=& \int_{B_{R+1}}\tilde{v}\Delta \tilde{w}_{N+1} - \tilde{w}_{N+1} \Delta \tilde{v}dx
\nonumber\\
&=&
\int_{B_{R+1}}-f\prod_{h=1}^{N+1}\delta^2 T_q Hg_h \tilde{v} dx, \label{5.27}
\end{eqnarray}
which implies that dividing by $\delta^2>0$
\begin{equation}
\int_{B_{R+1}}f\prod_{h=1}^{N+1}T_q Hg_h \tilde{v} dx=0.\label{5.28} 
\end{equation}
Let $ v \in L^{2}(B_{R+1})$ be a solution of $\Delta v+k^2v+qv=0$ in $B_{R+1}$. By Lemma 2.1 we can choose $g_{N+1}$ as $g_{N+1,j} \in L^{2}(B_{R+1})$ such that $T_qHg_{N+1,j} \to v $ in $L^{2}(B_R)$ as $j \to \infty$. Then, we have that
\begin{equation}
\int_{B_{R+1}}f\prod_{h=1}^{N}T_q Hg_h v \tilde{v} dx=0.\label{5.29} 
\end{equation}
which implies that by Lemma 2.2 
\begin{equation}
f\prod_{h=1}^{N}T_q Hg_h=0. \label{5.30}
\end{equation}
By Theorem 5.1 of \cite{G. Uhlmann}, we can choose a solution $u_h \in L^{2}(B_{R+1})$ ($h=1,...,N$) of $\Delta u_h + k^2 u_h +q u_h =0$ in $B_{R+1}$, which is of the form
\begin{equation}
u_h(x)=e^{x\cdot p_h}(1+\psi_h(x,p_h)),\label{5.31}
\end{equation}
with $\left\| \psi_h(\cdot, p_h) \right\|_{L^{2}(B_{R+1})} \leq \frac{C}{|p_h|}$ where $C>0$ is a constant, and $p_h=a_h+ib_h$, $a_h,b_h \in \mathbb{R}^d$ such that $|a_h|=|b_h|$ and $a_h \cdot b_h =0$ (which implies that $p_h \cdot p_h =0$), and $a_h\neq a_{h'}$, $b_h\neq b_{h'}$.
\par
Multiplying (\ref{5.30}) by $\overline{f}\prod_{h=1}^{N+1} e^{-x \cdot p_h}$ we have
\begin{equation}
|f|^2\prod_{h=1}^{N}e^{-x \cdot p_h}T_q Hg_h =0,\label{5.32}
\end{equation}
which implies that
\begin{equation}
\int_{B_R} |f|^2 \left( \prod_{h=1}^{N-1}e^{-x \cdot p_h}T_q Hg_h \right) e^{-x \cdot p_N}T_q Hg_Ndx =0.\label{5.33}
\end{equation}
By Lemma 2.1, there exists a sequence $\{g_{N,j} \}_{j\in \mathbb{N}} \subset L^{2}(\mathbb{S}^{d-1})$ such that $T_qHg_{N,j} \to u_N=e^{x\cdot p_N}(1+\psi_N(x,p_N)) $ in $L^{2}(B_R)$ , which implies that
\begin{equation}
\int_{B_R} |f|^2\left( \prod_{h=1}^{N-1}e^{-x \cdot p_h}T_q Hg_h \right) (1+\psi(x,p_N))dx =0.\label{5.34}
\end{equation}
As $|a_N|=|b_N| \to \infty$ in (\ref{5.34}) we have 
\begin{equation}
\int_{B_R}|f|^2\prod_{h=1}^{N-1}e^{-x \cdot p_h}T_q Hg_h =0.\label{5.35}
\end{equation}
Repeating the operation (\ref{5.33})--(\ref{5.35}) $N-1$ times, we have that
\begin{equation}
\int_{B_R}|f|^2 =0,\label{5.36}
\end{equation}
which conclude that $f=0$. By induction, we conclude that (\ref{5.0}) for all $l\in \mathbb{N}$. Therefore, Theorem 1.3 has been shown.
\section*{Acknowledgments}
The author thanks to Professor Mikko Salo, who supports him in this study, and gives him many comments to improve this paper.


\noindent Graduate School of Mathematics, Nagoya University, Japan.
\\
E-mail address: takashi.furuya0101@gmail.com

\end{document}